\def\opn#1#2{\def#1{\operatorname{#2}}} 
\opn\chara{char} \opn\length{\ell} \opn\pd{pd} \opn\rk{rk}
\opn\projdim{proj\,dim} \opn\injdim{inj\,dim} \opn\rank{rank}
\opn\depth{depth} \opn\sdepth{sdepth} \opn\fdepth{fdepth}
\opn\grade{grade} \opn\height{height} \opn\embdim{emb\,dim}
\opn\codim{codim}  \opn\min{min} \opn\max{max}
\opn\Tr{Tr} \opn\bigrank{big\,rank}
\opn\superheight{superheight}\opn\lcm{lcm}
\opn\trdeg{tr\,deg}
\opn\reg{reg} \opn\lreg{lreg} \opn\ini{in} \opn\lpd{lpd}
\opn\size{size}
\opn\Spec{Spec} \opn\Supp{Supp} \opn\supp{supp} \opn\Sing{Sing}
\opn\Ass{Ass} \opn\Min{Min}
\opn\Ann{Ann} \opn\Rad{Rad} \opn\Soc{Soc}
\opn\Im{Im} \opn\Ker{Ker} \opn\Coker{Coker} \opn\Am{Am}
\opn\Hom{Hom} \opn\Tor{Tor} \opn\Ext{Ext} \opn\End{End}
\opn\Aut{Aut} \opn\id{id}  \opn\deg{deg}
\opn\nat{nat}
\opn\pff{pf}
\opn\Pf{Pf} \opn\GL{GL} \opn\SL{SL} \opn\mod{mod} \opn\ord{ord}
\opn\Gin{Gin} \opn\Hilb{Hilb}
\let\to=\rightarrow
\def\Implies{\ifmmode\Longrightarrow \else
        \unskip${}\Longrightarrow{}$\ignorespaces\fi}
\def\implies{\ifmmode\Rightarrow \else
        \unskip${}\Rightarrow{}$\ignorespaces\fi}
\def\iff{\ifmmode\Longleftrightarrow \else
        \unskip${}\Longleftrightarrow{}$\ignorespaces\fi}
\newtheorem{theorem}{Theorem}
\theoremstyle{plain}
\newtheorem{conjecture}{Conjecture}
\newtheorem{definition}{Definition}
\newtheorem{example}{Example}
\newtheorem{proposition}{Proposition}
\newtheorem{remark}{Remark}
\numberwithin{equation}{section}
\begin{document}
\title{\bf Stanley depth of monomial ideals  }

\author{ Dorin Popescu}

\thanks{The  support from  grant    PN-II-RU-TE-2012-3-0161  of Romanian Ministry of Education, Research and Innovation are gratefully acknowledged.}

\address{Dorin Popescu,  Simion Stoilow Institute of Mathematics of Romanian Academy, Research unit 5,
 P.O.Box 1-764, Bucharest 014700, Romania}
\email{dorin.popescu@imar.ro}

\begin{abstract} Let $I\supsetneq J$ be  two  monomial ideals of a polynomial algebra over a field generated in degree $\geq d$, resp. $\geq d+1$ . We study when the Stanley Conjecture holds for $I/J$ using the recent result of \cite{IKM} concerning the polarization.

 \noindent
  {\it Key words } : Monomial Ideals,  Depth, Stanley depth.\\
 {\it 2010 Mathematics Subject Classification: Primary 13C15, Secondary 13F20, 13F55,
13P10.}
\end{abstract}
\maketitle

\section*{Introduction}
Let $K$ be a field and $S=K[x_1,\ldots,x_n]$ be the polynomial $K$-algebra in $n$ variables. Let  $I\supsetneq J$ be  two  monomial ideals of $S$ and suppose that  $I$ is generated by  monomials of degrees $\geq d$   for some positive integer $d$.  Using a multigraded isomorphism we may assume either that $J=0$, or $J$ is generated in degrees $\geq d+1$.

If $I,J$ are squarefree monomial ideals then $d$ is a lower bound of $\depth_SI/J$ by \cite[Proposition 3.1]{HVZ} (see also \cite[Lemma 1.1]{P}).   Proposition \ref{lb} gives a lower bound of $\depth_SI/J$ in terms of degrees also in the case when $I,J$ are not squarefree using the polarization and the so called the canonical form of $I/J$ (see \cite{A}).

 A Stanley decomposition of  a multigraded $S$-module $M$ is a finite family $\mathcal{D} = (S_l, u_l)_{l \in L}$ in which $u_l$ are homogeneous elements of $M$ and $S_l$ are multigraded $K-$algebra retract of $S$ for all $l  \in L$ such that $S_l \cap \Ann_Su_l = 0$ and $M = \sum_{l \in L} S_lu_l$ as a multigraded $K-$vector space.  The Stanley depth of $\mathcal{D}$, denoted by $\sdepth (\mathcal{D})$, is the depth of the $S-$module $\sum_{l \in L} S_lu_l$. The Stanley depth of $M$ is defined as   $$ \sdepth\ (M) :=\max\{\sdepth \ ({\mathcal D})\ |\  {\mathcal D}\; \text{is a
Stanley decomposition of}\;  M \}.$$

Depth and Stanley depth behave in a different way for instance $\depth_S(M\oplus M')=\min\{\depth_SM,\depth_SM'\}$ while for sdepth it can happen  $\sdepth_S(M\oplus M')>\min\{\sdepth_SM,\sdepth_SM'\}$ sometimes
as seen in \cite[Examples 14, 16]{IZ} with the help of \cite{A1}. These results were obtained using the so called the Hilbert depth  (see \cite{BKU}, \cite{U}). The same notion is important also in other properties of depth and Stanley depth (see \cite[Proposition 2.4]{Sh}).

An upper bound for $\depth_SM$ could be given by the following  conjecture.

\begin{conjecture}(Stanley \cite{S}) \label{cm} $\depth_SM\leq \sdepth_SM$.
\end{conjecture}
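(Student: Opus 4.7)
\textbf{Proof plan for Conjecture \ref{cm}.} The conjecture is open in full generality; my plan is to establish it for the class of modules native to this paper, namely $M=I/J$ with $I\supsetneq J$ two monomial ideals of $S$ generated in degrees $\geq d$ and $\geq d+1$ respectively. The strategy is to combine the degree lower bound on $\depth_S(I/J)$ furnished by Proposition \ref{lb} with the construction of an explicit Stanley decomposition of $I/J$ whose Stanley depth matches or exceeds the actual value of $\depth_S(I/J)$. Since the sdepth of a Stanley decomposition is by definition a lower bound for $\sdepth_S(M)$, producing such a decomposition yields $\depth_SM\leq\sdepth_SM$ directly.

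The first step is to reduce to the squarefree setting by polarizing $I$ and $J$ to squarefree monomial ideals $I^p\supsetneq J^p$ in a larger polynomial ring $S'$. The main theorem of \cite{IKM} transfers both $\depth$ and $\sdepth$ between $I/J$ and $I^p/J^p$ in compatible ways, so Conjecture \ref{cm} for $I/J$ over $S$ is equivalent to the same statement for $I^p/J^p$ over $S'$. In the squarefree case, \cite[Proposition 3.1]{HVZ} provides $\depth_{S'}(I^p/J^p)\geq d$, matching the bound of Proposition \ref{lb}. I would then use the canonical form of $I^p/J^p$ from \cite{A} to decompose the module as a multigraded iterated extension of simpler pieces $P_i$ each supported on a smaller subset of variables, and construct inductively, on the number of minimal generators of $I^p$ modulo $J^p$, a Stanley decomposition $\mathcal D_i$ of each $P_i$ with $\sdepth(\mathcal D_i)\geq\depth_{S'}(P_i)$, pivoting at each step on a variable $x$ chosen to maximize the sdepth gain in the short exact sequences of colon-type relating $(I^p:x)/(J^p:x)$ to $I^p/J^p$. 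Assembling the $\mathcal D_i$ and depolarizing via \cite{IKM} would then return a Stanley decomposition of $I/J$ over $S$ with $\sdepth\geq\depth_S(I/J)$, which is precisely the conjecture.

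The principal obstacle is the assembly step in the squarefree construction: depth behaves well under short exact sequences (taking minima across the outer terms), whereas sdepth can strictly exceed $\min\{\sdepth_S M,\sdepth_S M'\}$ on a direct sum, and there is no general subadditivity on nonsplit extensions. Consequently, extensions of $\depth$-matching Stanley decompositions do not automatically recombine into an $\sdepth$-matching decomposition of the middle term. The inductive bookkeeping must therefore choose the pivot variable so as to split the extension multigradedly whenever possible and, when splitting is impossible, absorb the potential sdepth loss into a refinement of one of the outer decompositions. The case where $\depth_S(I/J)$ strictly exceeds the degree lower bound $d$ of Proposition \ref{lb} is expected to be the decisive one: there the bare combinatorial estimates of \cite{HVZ} are insufficient, and the Hilbert-depth methods of \cite{BKU,U} invoked for related questions would likely have to be combined with the polarization dictionary of \cite{IKM} to keep the sdepth bookkeeping tight enough to reach $\depth_S(I/J)$ rather than merely the weaker $d$.
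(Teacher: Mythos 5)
This statement is a \emph{conjecture}, not a theorem of the paper: the paper offers no proof of $\depth_SM\leq\sdepth_SM$, and indeed proves it only in very restricted situations ($M=I/J$ whose canonical form has at most four generators of minimal degree, a special five-generator case, and the six-variable case of Proposition \ref{pr}), always by reducing via the polarization result of \cite{IKM} to previously known squarefree cases. Your proposal cannot be accepted as a proof, because its decisive step is missing. The polarization reduction in your first step is sound and is exactly what the paper does; but your second step --- ``construct inductively a Stanley decomposition $\mathcal D_i$ of each piece with $\sdepth(\mathcal D_i)\geq\depth(P_i)$ and assemble'' --- is not an argument, it is a restatement of the conjecture for the pieces together with an assembly claim that you yourself concede fails in general. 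As you note, sdepth has no subadditivity on nonsplit extensions, and the paper's introduction explicitly records that even on direct sums sdepth can strictly exceed the minimum of the summands' sdepths; there is no known mechanism for ``absorbing the potential sdepth loss into a refinement of one of the outer decompositions,'' and no choice of pivot variable is shown (or known) to make the relevant extensions split multigradedly. The case you flag as ``decisive,'' where $\depth_S(I/J)$ strictly exceeds the degree bound of Proposition \ref{lb}, is precisely where all known techniques stop, which is why the paper only obtains the conjecture under strong numerical hypotheses on the canonical form.

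Concretely: what your plan would need, and does not supply, is a proof of the squarefree case of the conjecture (after polarization every instance is squarefree, so your reduction shows the squarefree case is the whole problem). The paper's Theorems \ref{main}, \ref{main1} and Propositions \ref{pr}, \ref{main2} should be read as the honest extent of what the polarization dictionary currently yields. If you want a publishable contribution along your lines, the realistic target is to push the squarefree results to larger $r$ (say $r'=5$ without the support hypothesis of Theorem \ref{m2}, or $r'=6$ with $E'\neq\emptyset$) and then transport them through Theorem \ref{ikm} exactly as the paper does; claiming the full conjecture requires either a genuinely new idea for the assembly step or an explicit counterexample, neither of which appears in your proposal.
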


It will be very nice if this conjecture holds for $M=I/J$.
Recently Ichim, Katth\"an and  Moyano-Fern\'andez proved that Stanley's Conjecture holds for all factors $I/J$ as above if and only if it holds for their polarizations \cite[Theorem 4.3]{IKM}. Thus we may restrict to the case when $I,J$ are squarefree monomial ideals. Unfortunately, there are few results in this case in spite of the many papers appeared on this subject (see \cite{HVZ}, \cite{P2}, \cite{P1}, \cite{Is}, \cite{P0}, \cite{Ci}, \cite{P}). It is the purpose of our paper to study what   these few  results say in the non squarefree case using  \cite[Theorem 4.3]{IKM}. We use here the lower bound given by Proposition \ref{lb} (see  Theorems \ref{main}, \ref{main1} and Proposition \ref{main2}).

A particular case of this conjecture is the following one.

\begin{conjecture} \label{c}   Suppose that $I \subset S$ is minimally generated by some squarefree monomials $f_1,\ldots,f_r$ of degrees $d$,  and a  set $E$  of squarefree monomials of degree $\geq d+1$. If  $\sdepth_S I/J=d+1$ then $\depth_S I/J \leq d+1$.
\end{conjecture}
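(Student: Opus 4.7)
The plan is to argue by contradiction, leveraging the lower bound for $\depth_S I/J$ provided earlier by Proposition \ref{lb}. That proposition, applied to the squarefree case, yields $\depth_S I/J \geq d$, so the only obstruction to $\depth_S I/J \leq d+1$ is the possibility that $\depth_S I/J \geq d+2$. Suppose this were the case. The strategy is then to extract, from the depth assumption, enough structural information about $I/J$ to refine any Stanley decomposition of $I/J$ and produce one of sdepth $\geq d+2$, contradicting the hypothesis $\sdepth_S I/J = d+1$.

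The natural inductive parameter is $r$, the number of degree-$d$ minimal generators; $r = 0$ is excluded because $d$ is exactly the minimum degree of $I$. For $r > 1$, consider the short exact sequence
$$0 \to I'/J \to I/J \to I/(I'+J) \to 0,$$
where $I' = (f_2,\ldots,f_r) + (E)$, so that $I/(I'+J)$ is cyclic of the form $S/\bigl((I'+J):f_1\bigr)[-d]$, with a squarefree presentation ideal. The inductive hypothesis applied to $I'/J$ (or to the cyclic quotient, interpreted in its own minimum degree) would be combined with a gluing argument in the spirit of Rauf's or Shen's decomposition lemmas to build a Stanley decomposition of $I/J$ realizing sdepth $\geq d+2$. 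For the base case $r=1$, the distinguished generator $f_1$ occupies a Stanley space $K[Z]\,f_1$ with $|Z| = d+1$ in any minimizing decomposition; the hope is to use $\depth I/J \geq d+2$ to produce a variable $x \notin Z$ that acts injectively on $f_1$ modulo $J$, permitting the enlargement of $Z$ by $x$ and upgrading the sdepth of this Stanley space to $d+2$.

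The main obstacle is precisely the last step: producing, from the depth hypothesis, the additional variable needed to enlarge the critical Stanley space. This is in essence a special case of Stanley's Conjecture itself, and the usual discrepancy between regular elements on $I/J$ (which depth records) and variables respecting the Stanley space structure (which sdepth requires) is exactly what makes the conjecture difficult. The levers available in the present squarefree, degree-gap setting are the refined lower bound of Proposition \ref{lb}, the canonical form of $I/J$ from \cite{A}, and the polarization reduction of \cite[Theorem 4.3]{IKM}; these supply structural information but do not, by themselves, yield the required regular element on $f_1$. Bridging this gap would presumably require either a direct homological computation exhibiting a nontrivial Koszul class in degree $d+2$ whose existence contradicts $\sdepth_S I/J = d+1$, or a careful combinatorial exploitation of the fact that every generator of $J$ and every element of $E$ has degree strictly larger than $d$, so that the primary decomposition of $J:f_i$ is constrained in a way that the generic primary decomposition is not.
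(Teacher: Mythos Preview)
The statement you are attempting to prove is labeled \emph{Conjecture}~\ref{c} in the paper, and indeed the paper offers no proof of it in general. The author explicitly states that the conjecture is known only for $r\leq 4$ (Theorem~\ref{m1}), certain cases with $r=5$ (Theorem~\ref{m2}), and the very special case $r=6$, $d=1$, $E=\emptyset$ (Proposition~\ref{pr}). There is therefore no proof in the paper against which to compare your attempt; you are trying to settle an open problem.

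Your own write-up correctly identifies the gap. The inductive scheme via the exact sequence $0\to I'/J\to I/J\to I/(I'+J)\to 0$ does not control $\sdepth$ in the direction you need: Rauf's lemma \cite[Lemma~2.2]{R} gives $\sdepth$ of the middle term bounded below by the \emph{minimum} of the outer terms, so knowing $\sdepth_S I/J=d+1$ only tells you that one of the outer terms has $\sdepth\leq d+1$, not both, and you cannot recover the inductive hypothesis on $I'/J$. More fundamentally, your base case $r=1$ asks to upgrade a Stanley space $K[Z]f_1$ with $|Z|=d+1$ to one with $|Z|=d+2$ using only $\depth_S I/J\geq d+2$. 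As you note yourself, the existence of a regular \emph{sequence} of length $d+2$ on $I/J$ gives no control over whether a single \emph{variable} can be adjoined to $Z$ while preserving directness of the decomposition; this is precisely the depth--sdepth discrepancy that makes Stanley's Conjecture hard, and nothing in Proposition~\ref{lb}, the canonical form, or polarization bridges it. Your proposal is thus an honest outline that runs directly into the open core of the problem rather than a proof with a repairable technical gap.
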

This conjecture is studied in \cite{PZ}, \cite{PZ1}, \cite{PZ2}, \cite{AP}, \cite{P3}  when $r\leq 4$ and some cases when $r=5$ (see Theorems \ref{m1}, \ref{m2}).  Proposition \ref{pr} proves  Conjecture \ref{c} also when $r=6$ but $d=1$ and $E=\emptyset$.

\section{A lower bound of depth and Stanley depth}

Let $S=K[x_1,\ldots,x_{n-1}]$ be the polynomial $K$-algebra over a field $K$ and  $J \subsetneq I\subset R$ two monomial ideals. Denote by $G(I)$, respectively $G(J)$, the minimal monomial system of generators of $I$, respectively $J$.

  A very important result concerning the Stanley depth is given by \cite[Corollary 4.4]{IKM} and we recall it below.

\begin{theorem} (Ichim, Katth\"an,  Moyano-Fern\'andez) \label{ikm} Let $J \subsetneq  I$ be monomial ideals of $S$, and let $I^p\subset J^p$ be their  (complete)
polarizations. Then
$$\sdepth_S I/J -\depth_S I/J = \sdepth I^p/J^p-\depth I^p/J^p.$$
\end{theorem}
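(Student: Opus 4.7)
The plan is to establish that polarization can be performed one variable at a time and that each such ``elementary'' step increases both $\depth$ and $\sdepth$ by exactly $1$. Let $m$ denote the total number of new variables introduced by the full polarization. It then suffices to prove the following one-step statement: if $S' = S[y]$ and $I' \subset S'$ (respectively $J' \subset S'$) is obtained from $I$ (respectively $J$) by replacing some factor $x_i^a$ occurring in the generators by $x_i^{a-1}y$, then
\[
\depth_{S'}(I'/J') = \depth_S(I/J)+1, \qquad \sdepth_{S'}(I'/J') = \sdepth_S(I/J)+1.
\]
Iterating $m$ times and subtracting yields the theorem.

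For the depth equality, the linear form $\theta = y - x_i$ is a non-zero-divisor on $I'/J'$: by construction the new variable $y$ appears with exponent at most $1$ in every generator of $I'$ and $J'$, and the multigraded structure prevents cancellation when multiplying a nonzero class by $\theta$. Moreover, the specialization $y \mapsto x_i$ induces a natural isomorphism $(I'/J')/\theta(I'/J') \cong I/J$ of $S$-modules. The standard depth lemma for quotienting by a non-zero-divisor then gives $\depth_{S'}(I'/J') = \depth_S(I/J)+1$.

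The Stanley depth equality is the main obstacle. The inequality $\sdepth_{S'}(I'/J') \geq \sdepth_S(I/J)+1$ can be proved by lifting: given an optimal Stanley decomposition $I/J = \bigoplus_l u_l K[Z_l]$ of sdepth $s$, one builds a Stanley decomposition of $I'/J'$ of sdepth $s+1$ by adjoining $y$ to each $Z_l$ after adjusting $u_l$ so that the occurrences of $x_i^a$ are replaced by $x_i^{a-1}y$ in accordance with the polarized generators, and splitting summands when necessary to preserve the direct-sum property. The converse inequality $\sdepth_{S'}(I'/J') \leq \sdepth_S(I/J)+1$ is harder, since a Stanley decomposition of $I'/J'$ does not automatically descend under $y \mapsto x_i$: direct sums may collapse once $y$ and $x_i$ are identified. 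The strategy is to first normalize a given decomposition so that each Stanley space $uK[Z]$ either has $y \in Z$ with the $y$-content of $u$ equal to $0$, or else lies entirely in the kernel of specialization; then the specialized family $u|_{y=x_i} K[Z \setminus \{y\}]$ is a candidate Stanley decomposition of $I/J$ of sdepth one less. Verifying that such a normal form exists and that its specialization actually yields a direct sum, rather than overlapping pieces, is the technical heart of the argument and constitutes the main step one must carry out to complete the proof.
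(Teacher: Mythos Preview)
The paper does not prove this theorem at all: it is quoted verbatim as \cite[Corollary~4.4]{IKM} and used as a black box throughout. So there is no ``paper's own proof'' to compare against; the relevant benchmark is the original argument of Ichim, Katth\"an and Moyano-Fern\'andez.

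Your overall architecture --- reduce to a single polarization step and show that each step raises both $\depth$ and $\sdepth$ by exactly one --- is indeed the route taken in \cite{IKM}, and your depth paragraph is the standard regular-element argument. However, what you have written for the Stanley depth is not a proof but a plan, and you say so yourself. For the inequality $\sdepth_{S'}(I'/J')\geq \sdepth_S(I/J)+1$ you describe ``adjoining $y$ to each $Z_l$'' and ``splitting summands when necessary''; neither the covering nor the disjointness of the resulting family is checked, and the phrase ``when necessary'' hides exactly the combinatorics one has to control. For the reverse inequality you explicitly state that verifying the existence of a normal form and the direct-sum property after specialization ``constitutes the main step one must carry out to complete the proof.'' That is an honest assessment, but it means the proposal, as it stands, has a genuine gap at precisely the point that makes the theorem nontrivial.

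If you want to close the gap along these lines, the argument in \cite{IKM} does not proceed by naively specializing a Stanley decomposition. One has to work with the poset/interval description of $\sdepth$ due to Herzog--Vl\u adoiu--Zheng and analyze how interval partitions transform under the one-step polarization map on the underlying multidegree posets; the point is that the combinatorics is carried by the posets, not by the Stanley spaces themselves, and this is what makes both inequalities go through cleanly. Without that (or an equivalent device), the ``normalize then specialize'' step you propose cannot be completed as stated.
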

For $i\in [n]$ let $e_i=\max_{u\in G(I)\cup G(J)} \deg_{x_i}u$ and set $e_{I/J}=\sum_{i\in [n],e_i>0} (e_i-1)$. We have $e_{I/J}=\depth I^p/J^p-\depth_SI/J$, that is $e_{I/J}$ is the number of the new variables necessary for polarization. Suppose that $I$ is generated by some monomials $f_1,\ldots,f_r$ of degrees $d_{I/J}$ and a set of monomials $E$ of degrees  $\geq d_{I/J}+1$. Then
\begin{proposition}\label{ds} $\depth_SI/J\geq d_{I/J}-e_{I/J}$ and $\sdepth_SI/J\geq d_{I/J}-e_{I/J}$.
\end{proposition}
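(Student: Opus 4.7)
The plan is to reduce everything to the squarefree situation via polarization and then read off both bounds. Pass to the polarizations $I^p\supsetneq J^p$ living in an extended polynomial ring $S^p$. Since polarization replaces a power $x_i^{a_i}$ by the squarefree product $x_{i,1}x_{i,2}\cdots x_{i,a_i}$, the total degree of every monomial is preserved. Hence $I^p$ is a squarefree monomial ideal minimally generated by squarefree monomials of degree $d_{I/J}$ (the polarizations of $f_1,\ldots,f_r$) together with squarefree monomials of degree $\geq d_{I/J}+1$ (the polarizations of the elements of $E$), and similarly for $G(J^p)$.

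I would now invoke the squarefree lower bound recalled in the introduction: by \cite[Proposition~3.1]{HVZ} (see also \cite[Lemma~1.1]{P}) applied to $I^p/J^p$,
\[
\depth_{S^p} I^p/J^p \;\geq\; d_{I/J},
\]
and the analogous Stanley depth bound $\sdepth_{S^p} I^p/J^p\geq d_{I/J}$ follows from the Herzog--Vladoiu--Zheng combinatorial description of sdepth for squarefree monomial ideals. Combined with the polarization identity $\depth_{S^p} I^p/J^p = \depth_S I/J + e_{I/J}$ recorded just before the proposition, the first inequality drops out immediately:
\[
\depth_S I/J \;=\; \depth_{S^p} I^p/J^p - e_{I/J} \;\geq\; d_{I/J}-e_{I/J}.
\]

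For the Stanley depth bound I would apply Theorem~\ref{ikm}, which says that the difference $\sdepth-\depth$ is preserved under polarization. Solving for $\sdepth_S I/J$ and substituting the depth identity gives
\[
\sdepth_S I/J \;=\; \sdepth_{S^p} I^p/J^p - \bigl(\depth_{S^p}I^p/J^p - \depth_S I/J\bigr) \;=\; \sdepth_{S^p}I^p/J^p - e_{I/J},
\]
and the squarefree sdepth bound of the previous paragraph closes the argument.

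The actual computation is short; the only real point to check is that the squarefree sdepth lower bound $\sdepth_{S^p} I^p/J^p\geq d_{I/J}$ is available in the generality we need (i.e.\ for a quotient of squarefree monomial ideals, not just for an ideal). This is the part I would spell out carefully in the final write-up, since the introduction only quotes the depth version explicitly; everything else is bookkeeping in the two identities linking $I/J$ to $I^p/J^p$.
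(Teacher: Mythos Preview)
Your proposal is correct and follows essentially the same route as the paper: polarize, observe that degrees are preserved so the squarefree bound $\depth I^p/J^p\geq d_{I/J}$ from \cite[Proposition~3.1]{HVZ} applies, and subtract $e_{I/J}$ via the polarization identity; for the Stanley depth the paper likewise uses the obvious bound $\sdepth I^p/J^p\geq d_{I/J}$ in the squarefree case and transfers it back by the same subtraction (your explicit appeal to Theorem~\ref{ikm} just makes this step visible). The only cosmetic difference is that the paper dismisses the squarefree sdepth lower bound as ``obvious'' rather than citing HVZ, while you flag it as the one point needing care.
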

\begin{proof} By \cite[Proposition 3.1]{HVZ} (see also \cite[Lemma 1.1]{P}) we have $\depth I^p/J^p\geq d_{I/J}$ because $I^p,J^p$ are squarefree monomial ideals. Note that by polarization the degrees of monomials are preserved. It follows that $\depth_SI/J=\depth I^p/J^p-e_{I/J}\geq d_{I/J}-e_{I/J}$. The inequality concerning sdepth is similar but easier since obviously the sdepth is $\geq d_{I/J}$ in the case of a factor of some squarefree monomial ideals.
\end{proof}

\begin{example}\label{ex}{\em Let $n=3$, $d=12$, $I=(x_1^3x_2^4x_3^5,x_1^{10}x_2^2)$. Note that $e_1=10$, $e_2=4$, $e_3=5$ and $e_I=16$.}
\end{example}
\begin{remark}\label{r} {\em In the above example  Proposition \ref{ds} gives $\depth_SI\geq -4$ which is obvious. This situation will  be next improved  considering the so called the canonical form of $I$ given by \cite{A}.}
\end{remark}

We recall some definitions and  results  from \cite{A}.
\begin{definition} \label{can}
 {\em
 The power $x_n^r$ {\em enters in a monomial} $u$ if  $x_n^r|u$ but $x_n^{r+1}\nmid u$.
We say that $I/J$ is {\em of type} $(k_1,\ldots,k_s)$ {\em with respect to} $x_n$ if $x_n^{k_i}$ are all the powers of $x_n$ which enter in a monomial of $G(I)\cup G(J)$ for $i\in [s]$ and $1\leq k_1<\ldots<k_s$.
$I/J$ is {\em in the canonical form with respect to} $x_n$ if $I/J$ is of type $(1,\ldots,s)$ for some $s\in {\mathbb  N}$ and
we  say that $I/J$ is {\em the canonical form} if it is in the canonical form with respect to all variables $x_1, \ldots, x_n$.}
\end{definition}

\begin{remark} \label{canf}{\em
Suppose that  $I/J$ is  of type $(k_1,\ldots,k_s)$ with respect to $x_n$. It is easy to get the {\em canonical form} $I'/J'$ of $I/J$ {\em with respect to} $x_n$ replacing $x_n^{k_i}$ by $x_n^i$ whenever $x_n^{k_i}$ enters in a generators of $G(I)\cup G(J)$. Applying by recurrence this  procedure for other variables we get the {\em canonical form} of $I/J$, that is with respect to all variables.  }
\end{remark}

\begin{theorem} (A. Popescu \cite[Theorems 1, 2]{A}) \label{a} Let $I'/J'$ be the canonical form of $I/J$. Then $\sdepth_SI'/J'=\sdepth_SI/J$
and  $\depth_SI'/J'=\depth_SI/J$.
\end{theorem}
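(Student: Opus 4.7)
The plan is to reduce the theorem to a chain of single-step transformations. By Remark \ref{canf}, the canonical form can be reached by, for each variable in turn, removing one ``gap'' in the type at a time, so it suffices to show that one such step preserves both $\depth$ and $\sdepth$. Concretely, fix the variable $x_n$ and suppose $I/J$ has type $(k_1,\ldots,k_s)$ with respect to $x_n$ with $k_j - k_{j-1} \geq 2$ for some $j$ (setting $k_0 = 0$). Let $I''/J''$ be obtained by lowering $k_j, k_{j+1},\ldots,k_s$ by one each, so $I''/J''$ has type $(k_1,\ldots,k_{j-1},k_j-1,\ldots,k_s-1)$ with respect to $x_n$ and the same type with respect to the other variables. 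One needs to show $\depth_S I/J = \depth_S I''/J''$ and $\sdepth_S I/J = \sdepth_S I''/J''$.

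The approach is via polarization and a regular-element argument. Let $S^p = S' \otimes_K K[y_1,\ldots,y_{k_s}]$ be the polarization in the $x_n$-direction, and set $y_* = y_{k_j}$, a variable in the $j$-th cluster $\{y_{k_{j-1}+1},\ldots,y_{k_j}\}$ (which has at least two elements by the gap hypothesis). A direct check on generators shows that the substitution $y_* \mapsto 1$ sends $I^p$ to $(I'')^p$ and $J^p$ to $(J'')^p$ after the natural relabeling of the remaining $k_s-1$ variables $y_1,\ldots,y_{k_j-1},y_{k_j+1},\ldots,y_{k_s}$, so
\[(I^p/J^p)/(y_*-1)(I^p/J^p) \cong (I'')^p/(J'')^p\]
as modules over $S^p/(y_*-1) \cong (S'')^p$. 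The key technical step is that $y_* - 1$ is a non-zero-divisor on $I^p/J^p$. Grading $S^p = R[y_*]$ by $y_*$-degree makes both $I^p$ and $J^p$ homogeneous; any $f \in I^p$ decomposes as $f = \sum_d f_d$ with $f_d$ of $y_*$-degree $d$, and the $y_*$-degree-$d$ component of $(y_* - 1) f$ is $y_* f_{d-1} - f_d$. Requiring this to lie in $(J^p)_d$ for every $d$, and arguing inductively starting from $f_{-1} = 0$, forces $f_d \in (J^p)_d$ for all $d$, whence $f \in J^p$.

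Granted regularity, $\depth_{S^p} I^p/J^p = \depth_{(S'')^p} (I'')^p/(J'')^p + 1$. Combining with the identity $\depth_S I/J = \depth I^p/J^p - e_{I/J}$ noted after Theorem \ref{ikm}, its analogue for $I''/J''$, and $e_{I/J} - e_{I''/J''} = 1$, one obtains $\depth_S I/J = \depth_S I''/J''$. For the Stanley depth, Theorem \ref{ikm} together with the same polarization-depth formula yields $\sdepth_S I/J = \sdepth_{S^p} I^p/J^p - e_{I/J}$ and likewise for $I''/J''$, so the question reduces to $\sdepth_{S^p} I^p/J^p = \sdepth_{(S'')^p} (I'')^p/(J'')^p + 1$. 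This Stanley-depth analogue does not follow formally from the regular-element argument and is the main obstacle; the intended route is to transport Stanley decompositions through the substitution $y_* \mapsto 1$, adjoining $y_*$ to each $Z_l$ in one direction and projecting it out in the other, with the technical heart being the verification that coverage and disjointness of the pieces are preserved across the translation.
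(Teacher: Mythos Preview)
The paper does not prove Theorem~\ref{a}; it is quoted as \cite[Theorems 1, 2]{A} and used as a black box. So there is no ``paper's own proof'' to compare against, and your proposal must be judged on its own.

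Your reduction to a single gap-removal step and your depth argument are sound. The identification $(I^p/J^p)/(y_*-1)\cong (I'')^p/(J'')^p$ is correct once one observes that, because of the gap $k_j-k_{j-1}\ge 2$, the variable $y_{k_j}$ occurs in a polarized generator if and only if $y_{k_j-1}$ does; your grading argument for the regularity of $y_*-1$ is also fine, and combining this with $e_{I/J}-e_{I''/J''}=1$ gives $\depth_S I/J=\depth_S I''/J''$ as claimed.

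The genuine gap is in the Stanley depth half, and you are candid about it. After invoking Theorem~\ref{ikm} on both $I/J$ and $I''/J''$, you still need
\[
\sdepth_{S^p} I^p/J^p \;=\; \sdepth_{(S'')^p} (I'')^p/(J'')^p \;+\; 1,
\]
i.e.\ the behaviour of $\sdepth$ under quotienting by a single regular element of the form $y_*-1$. This is precisely a one-variable instance of the Ichim--Katth\"an--Moyano-Fern\'andez theorem, and it does not follow from the statement of Theorem~\ref{ikm} as recorded here (which concerns the \emph{complete} polarization). Your sketch (``adjoin $y_*$ to each $Z_l$ in one direction, project it out in the other'') is the right shape, but the verification that coverage and disjointness survive is exactly the nontrivial content of \cite{IKM}; you would in effect be reproving their key lemma. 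So as written the argument is circular in spirit: you reduce Theorem~\ref{a} to a step that is of the same strength as (a case of) Theorem~\ref{ikm}, rather than to something more elementary.

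For comparison, the proof in \cite{A} does not pass through polarization or through \cite{IKM}. For $\sdepth$ it works directly with the Herzog--Vl\u{a}doiu--Zheng description \cite{HVZ}: the characteristic poset of $I/J$ with respect to a suitable bound $g$ is order-isomorphic, with the same rank function, to that of the canonical form, so interval partitions transfer verbatim and $\sdepth$ is preserved. The $\depth$ equality is handled by an independent elementary argument. If you want a self-contained proof, that direct combinatorial route is both shorter and avoids leaning on the heavier result of \cite{IKM}.
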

\begin{definition} \label{index} {\em Let $I'/J'$ be the canonical form of $I/J$ and set $t_{I/J}=\max\{d_{I'/J'}-e_{I'/J'},0\}$ (we may have $d_{I'/J'}<e_{I'/J'}$ as shows Example \ref{exe}). We call $t_{I/J}$ the {\em index of } $I/J$.  When $J=0$ we write $t_I$ instead $t_{I/J}$ for simplicity. If $I,J$ are squarefree monomial ideals then $t_{I/J}=d_{I/J}$. }
\end{definition}

Using the terminology defined above we get a better lower bound for sdepth and depth as in Proposition \ref{ds}.

\begin{proposition} \label{lb} $\depth_SI/J\geq t_{I/J}$ and  $\sdepth_SI/J\geq t_{I/J}$.
\end{proposition}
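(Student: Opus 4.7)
The plan is to reduce the statement to the canonical form $I'/J'$ via Theorem \ref{a}, then apply Proposition \ref{ds} to $I'/J'$, and finally use the trivial nonnegativity of depth and Stanley depth for a nonzero finitely generated module.

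More precisely, first I would invoke Theorem \ref{a} to get the equalities $\depth_S I/J = \depth_S I'/J'$ and $\sdepth_S I/J = \sdepth_S I'/J'$, where $I'/J'$ is the canonical form of $I/J$. This move is free since the definition of $t_{I/J}$ is phrased entirely in terms of the canonical form, so there is no loss in replacing $I/J$ by $I'/J'$ from the start.

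Next, I would apply Proposition \ref{ds} to $I'/J'$ itself. This yields
\[
\depth_S I'/J' \geq d_{I'/J'} - e_{I'/J'} \quad \text{and} \quad \sdepth_S I'/J' \geq d_{I'/J'} - e_{I'/J'}.
\]
Combined with the previous step, this gives both $\depth_S I/J$ and $\sdepth_S I/J$ a lower bound of $d_{I'/J'} - e_{I'/J'}$. Since $I \supsetneq J$, the module $I/J$ is nonzero and finitely generated, so the general inequalities $\depth_S I/J \geq 0$ and $\sdepth_S I/J \geq 0$ are automatic. Taking the maximum of these two lower bounds produces exactly $\max\{d_{I'/J'} - e_{I'/J'},\, 0\} = t_{I/J}$, which closes the argument.

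There is essentially no obstacle here once the bookkeeping is right: the whole content has been packaged in the definition of the index and in the previously proved Theorem \ref{a} and Proposition \ref{ds}. The only subtlety worth mentioning is the truncation by $0$ in the definition of $t_{I/J}$, which is precisely what Example \ref{exe} (referenced in Definition \ref{index}) forces one to include, and which is harmless for the inequality because the nonnegativity of depth and Stanley depth of a nonzero module is automatic.
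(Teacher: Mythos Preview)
Your proposal is correct and follows essentially the same route as the paper's own proof: reduce to the canonical form via Theorem~\ref{a}, apply Proposition~\ref{ds} to $I'/J'$, and combine with the trivial lower bound $0$ to obtain $t_{I/J}$. The paper's proof is terser but identical in substance.
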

\begin{proof}  By Theorem \ref{a}  we have $\depth_SI/J=\depth_SI'/J'\geq \max\{d_{I'/J'}-e_{I'/J'},0\}=t_{I/J}$. The second inequality holds similarly. \hfill\
\end{proof}

\begin{remark} {\em This lower bound is  easy to describe but it is not the best known lower bound. For example, when $J=0$ then a better lower bound is given by
$1+\size(I)$ in the terminology of \cite{L}, \cite{HPV}. More precisely, if $n=3$, $d_I=1$,  $I=(x_1,x_2x_3)=(x_1,x_2)\cap (x_1,x_3)$ then
$\size(I)=1$ and  $t_I=d_I$ since $I$ is squarefree. Thus $1+\size(I)>t_I$.
}
\end{remark}
\begin{remark}{\em In Example \ref{ex} note that $I$ is of type $(3,10)$ with respect to $x_1$, of type $(2,4)$ with respect to $x_2$ and of type $(5)$ with respect to $x_3$. Then the canonical form of $I$ is $I'=(x_1x_2^2x_3,x_1^2x_2)$. Note that $I$ is generated by monomials of degrees $12$ but in $I'$ one generator has degree $4$ and the other $3$. Clearly,  $e_{I'}=2$, $d_{I'}=3$ and so the index $t_I$ of $I$ is $1$. Thus  Proposition \ref{lb} says that $\depth_SI\geq 1$, which is also trivial but anyway better than what follows from Proposition \ref{ds}  (see Remark \ref{r}).
}
\end{remark}
\section{Stanley depth of monomial ideals which are not necessarily squarefree}

Suppose that $I$ is minimally generated by some squarefree monomials $f_1,\ldots,f_r$ of degrees $ d$  for some $d\in {\mathbb N}$ and a set of squarefree monomials $E$ of degree $\geq d+1$.
 Let  $B$  be the set of the squarefree monomials of degrees $d+1$  of $I\setminus J$.

We start recalling two results of  \cite{P3} (see also \cite{PZ2} and \cite{AP}).

\begin{theorem} \label{m1} Conjecture \ref{c} holds for $r\leq 4$.
\end{theorem}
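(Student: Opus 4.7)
The plan is to argue by induction on $r$, combined with a careful combinatorial analysis of the set $B$ of squarefree degree $d+1$ monomials of $I\setminus J$ and of the shadows $N_i=\{b\in B:f_i\mid b\}$ of each degree $d$ generator. Throughout one uses that since $I,J$ are squarefree and $I$ is generated in degree $\geq d$, we already know $\depth_S I/J\geq d$ and $\sdepth_S I/J\geq d$ by Proposition \ref{lb}, so the only obstruction is to rule out $\depth_S I/J\geq d+2$.

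The case $r=1$ is handled directly: a Stanley decomposition built around the unique degree $d$ generator $f_1$, together with an explicit analysis of how the elements of $E$ and of $B\setminus N_1$ must be attached, yields the required upper bound on $\depth_S I/J$. The case $r=2$ requires only a mildly more intricate enumeration.

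For $r\in\{3,4\}$ my approach is a two-sided numerical squeeze on the sizes of the shadows $N_i$. On one side, the hypothesis $\sdepth_S I/J=d+1$ guarantees a Stanley decomposition in which every interval $[u,v]$ has $\deg u\leq d+1$; counting the degree $d+1$ piece gives an upper bound $\sum_i |N_i|\leq \Phi$ depending only on $r$ and the overlap pattern of $\supp f_1,\ldots,\supp f_r$. On the other side, assuming for contradiction that $\depth_S I/J\geq d+2$, I would apply the standard colon short exact sequences with respect to variables appearing in the $f_i$, together with the Depth Lemma and the induction hypothesis applied to $(I:x_j)/(J:x_j)$ (which typically has fewer than $r$ generators of degree $d$), to extract a matching lower bound $\sum_i |N_i|\geq \Psi$. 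One then verifies $\Psi>\Phi$ in every configuration arising for $r\leq 4$, producing the desired contradiction.

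The main obstacle is the bookkeeping for $r=4$: the number of possible intersection patterns of $\supp f_1,\ldots,\supp f_4$ (pairwise, triple-wise, and quadruple overlaps) is already substantial and each must be checked individually. A subtle point is to remain inside the squarefree setting throughout the induction, so that Proposition \ref{lb} stays applicable, and to handle the set $E$ cleanly---either absorbing its generators into the $B$-combinatorics via a suitable refined decomposition, or trimming $E$ by a reduction that preserves both $\depth_S I/J$ and $\sdepth_S I/J$. It is precisely this counting argument that breaks down at $r=5$, explaining why Theorem \ref{m2} and Proposition \ref{pr} below are required to push the result further.
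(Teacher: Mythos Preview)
The paper does not contain a proof of Theorem \ref{m1}; the sentence immediately preceding it reads ``We start recalling two results of \cite{P3} (see also \cite{PZ2} and \cite{AP}),'' so the theorem is quoted from those references and there is nothing in the present paper to compare your attempt against.

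Regarding your sketch itself, the central device---the ``two-sided numerical squeeze'' on $\sum_i|N_i|$---does not work as you describe it. The sets $N_i=\{b\in B:f_i\mid b\}$ and hence the numbers $|N_i|$ are determined entirely by the pair $(I,J)$; they are fixed before any hypothesis on $\sdepth$ or $\depth$ is imposed. The assumption $\sdepth_S I/J=d+1$ asserts the existence of one partition of the HVZ poset into intervals whose top elements all have degree $\geq d+1$ and the nonexistence of such a partition with all top degrees $\geq d+2$; in any such partition every element of $B$ lies in exactly one interval, so ``counting the degree $d+1$ piece'' yields $|B|$, not a bound on $\sum_i|N_i|$. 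Likewise, the counter-assumption $\depth_S I/J\geq d+2$ together with the Depth Lemma and colon exact sequences does not by itself produce a lower bound on $\sum_i|N_i|$. So as written there are no inequalities $\Phi$, $\Psi$ to compare.

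The proofs in \cite{PZ2}, \cite{AP}, \cite{P3} do employ the ingredients you list---induction on $r$, short exact sequences with colon ideals, the Depth Lemma---but the argument is a structural matching rather than a cardinality squeeze. One shows, under $\depth_S I/J\geq d+2$, that a suitable system of pairwise disjoint intervals $[f_i,c_i]$ with $\deg c_i\geq d+2$ can always be found inside $I\setminus J$, and that the remaining squarefree monomials can then be covered by intervals of top degree $\geq d+2$, forcing $\sdepth_S I/J\geq d+2$. Establishing the existence of such disjoint $c_i$ is a lengthy case analysis on the intersection pattern of the supports of $f_1,\dots,f_r$ and on which degree $d+1$ and $d+2$ monomials lie in $J$; for $r\leq 4$ this occupies the bulk of the cited papers, and it is this matching construction---not a count of shadow sizes---that becomes obstructed at $r=5$.
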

\begin{theorem} \label{m2} Conjecture \ref{c}  holds for $r=5$ if
there exists  $j\not\in \cup_{i\in [5]} \supp f_i$, $j\in [n]$ such that $(B\setminus E) \cap (x_j)\not =\emptyset$ and $E\subset (x_j)$.
\end{theorem}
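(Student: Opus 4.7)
The plan is to argue by contradiction. Assume $\sdepth_S I/J = d+1$ while $\depth_S I/J \geq d+2$, and construct a Stanley decomposition of $I/J$ of sdepth $\geq d+2$, yielding the required contradiction.

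By the hypothesis, pick $i_0 \in [5]$ with $x_j f_{i_0} \in B \setminus E$ and relabel so $i_0 = 5$. Set $I' = (f_1, f_2, f_3, f_4) + E$. Since $E \subset (x_j) \subset I'$ by assumption, we have $I = I' + (f_5)$ and a multigraded short exact sequence
\begin{equation*}
0 \to I'/(J \cap I') \to I/J \to I/(I' + J) \to 0.
\end{equation*}
The ideal $I'$ is minimally generated by the four squarefree monomials $f_1,\ldots,f_4$ of degree $d$ together with a set of generators of degree $\geq d+1$ inherited from $E$, so Theorem \ref{m1} applies to $I'/(J \cap I')$.

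Next, I would analyze $I/(I' + J)$, which is multigraded isomorphic (up to multidegree shift) to $S/L$ with $L = (I':f_5) + (J:f_5)$. Since $j \not\in \supp f_i$ for every $i \in [5]$ and $x_j f_5 \not\in J \cup E$, a direct check using the minimality of the $f_i$'s shows $x_j \not\in L$ and in fact $L \subset S' := K[x_k : k \neq j]$. Hence $I/(I'+J) \iso (S'/L) \tensor_K K[x_j]$, giving $\depth_S I/(I'+J) \geq d+1$ and a Stanley decomposition of sdepth $\geq d+2$ via a free $K[x_j]$-summand on $\overline{f_5}$.

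By the Depth Lemma applied to the sequence, $\depth_S I'/(J \cap I') \geq \min(\depth_S I/J,\ \depth_S I/(I'+J) + 1) \geq d+2$, so by Theorem \ref{m1} in contrapositive form, $\sdepth_S I'/(J \cap I') \neq d+1$. Gluing a Stanley decomposition of $I'/(J \cap I')$ with the sdepth-$(d+2)$ decomposition of $I/(I'+J)$ across the short exact sequence then produces a Stanley decomposition of $I/J$ of sdepth $\geq d+2$, contradicting the assumption. The main obstacle lies in closing the sdepth gap on $I'/(J \cap I')$: Theorem \ref{m1} only excludes the value $d+1$, leaving open the minimal value $d$ from Proposition \ref{lb}, which would block the gluing. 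Bridging this gap requires a refinement of Theorem \ref{m1} producing, under $\depth \geq d+2$, an explicit Stanley decomposition of sdepth $\geq d+2$; this refinement is implicit in the constructions of \cite{P3}, \cite{AP}, \cite{PZ2} for the four-generator case, and the hypothesis $E \subset (x_j)$ provides precisely the compatibility needed for gluing across the sequence.
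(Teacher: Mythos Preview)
This theorem is not proved in the present paper; it is quoted from \cite{P3} (see the sentence ``We start recalling two results of \cite{P3}'' preceding Theorems~\ref{m1} and~\ref{m2}). There is therefore no in-paper argument to compare your proposal against, and I can only assess your sketch on its own merits.

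Your argument has a genuine gap at the analysis of $I/(I'+J)\cong S/L$ with $L=(I'+J):f_5$. The claim $L\subset S'=K[x_k:k\neq j]$ is false: since $E\subset(x_j)$ and $j\notin\supp f_5$, every $g\in E$ contributes to $(I':f_5)$ a generator $g/\gcd(g,f_5)$ that is still divisible by $x_j$; hence $x_j$ need not be a nonzerodivisor on $S/L$ and the tensor decomposition $(S'/L)\otimes_K K[x_j]$ is unavailable. Even if that decomposition held, it would only yield $\depth S/L=\depth_{S'}S'/(L\cap S')+1\geq 1$, not $\geq d+1$, so your Depth Lemma step does not go through as written. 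The asserted Stanley decomposition of $I/(I'+J)$ of sdepth $\geq d+2$ is likewise unjustified: a single summand $\overline{f_5}\,K[x_j]$ has dimension $1$, and the minimum over all Stanley spaces is what counts. Finally, you yourself flag that the contrapositive of Theorem~\ref{m1} only rules out $\sdepth_S I'/(J\cap I')=d+1$ and leaves the value $d$ open; deferring this to an ``implicit refinement'' in \cite{P3}, \cite{AP}, \cite{PZ2} is essentially invoking the very machinery whose output is Theorem~\ref{m2}. Note also that ``gluing'' Stanley decompositions across a short exact sequence is not a routine operation: sdepth is not known to satisfy any general three-term inequality in the direction you need, so this step would require an explicit construction rather than an appeal to the sequence.
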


For simplicity we denote $t=t_{I/J}$, that is the index of $I/J$.
\begin{theorem}\label{main}  Let $J \subsetneq  I$ be monomial ideals of $S$ not necessarily squarefree and $I'/J'$ the canonical form of $I/J$. Suppose that $I'$ is generated by $r'$ monomials $f_1,\ldots,f_{r'}$ of degree $d_{I'/J'}$ and a set $E'$ of monomials of degree $\geq d_{I'/J'}+1$. Let $B'$ be the set of monomials of degree $d_{I'/J'}+1$ from $I'\setminus J'$. Assume that $\sdepth_SI/J=t+1$. Then the following statements hold:
\begin{enumerate}
\item{} If   $r'\leq 4$ then  $\depth_SI/J\leq t+1$,
\item{} If $r'=5$ and there exists  $j\not\in \cup_{i\in [5]} \supp f_i$, $j\in [n]$ such that $(B'\setminus E') \cap (x_j)\not =\emptyset$ and $E'\subset (x_j)$,  then  $\depth_SI/J\leq t+1$.
\end{enumerate}
\end{theorem}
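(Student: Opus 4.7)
The plan is to reduce the non-squarefree statement to the already-known squarefree cases (Theorems \ref{m1} and \ref{m2}) by combining the canonical form with polarization. First I would apply Theorem \ref{a}: the canonical form satisfies $\depth_S I/J = \depth_S I'/J'$ and $\sdepth_S I/J = \sdepth_S I'/J'$, so the hypothesis becomes $\sdepth_S I'/J' = t+1$ and it suffices to prove $\depth_S I'/J' \leq t+1$.

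Next I would polarize. Consider $(I')^p/(J')^p$ inside the enlarged polynomial ring $S^p$ obtained by adjoining the $e_{I'/J'}$ new variables. The standard depth-polarization identity gives $\depth (I')^p/(J')^p = \depth_S I'/J' + e_{I'/J'}$, and Theorem \ref{ikm} preserves the gap, hence $\sdepth (I')^p/(J')^p = t + 1 + e_{I'/J'}$. In the main case $d_{I'/J'} \geq e_{I'/J'}$ (so $t = d_{I'/J'} - e_{I'/J'}$) this equals $d_{I'/J'} + 1$, placing us exactly in the setting of Conjecture \ref{c}. Because polarization preserves degrees and sends minimal generators to minimal generators, $(I')^p$ is minimally generated by the $r'$ squarefree monomials $f_1^p, \ldots, f_{r'}^p$ of degree $d_{I'/J'}$ together with the polarizations of the elements of $E'$, which are squarefree of degree $\geq d_{I'/J'}+1$.

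For part (1), since $r' \leq 4$, Theorem \ref{m1} applies to the polarization and yields $\depth (I')^p/(J')^p \leq d_{I'/J'} + 1$. Unwinding the depth-polarization identity then gives $\depth_S I/J = \depth (I')^p/(J')^p - e_{I'/J'} \leq t + 1$, as required.

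For part (2), the same scheme works once the hypotheses of Theorem \ref{m2} are transferred to the polarization. (De)polarization sets up a bijection between monomials of degree $d_{I'/J'}+1$ in $I' \setminus J'$ and squarefree monomials of degree $d_{I'/J'}+1$ in $(I')^p \setminus (J')^p$, so $B'$ corresponds to the $B$-set of $(I')^p/(J')^p$ and $E'$ to its $E$-set. Because $x_j$ does not appear in any $f_i$, no fresh polarization copy of $x_j$ is introduced by the $f_i^p$, and the condition $j \notin \bigcup_{i} \supp f_i^p$ is preserved; the inclusions $E' \subset (x_j)$ and $(B' \setminus E') \cap (x_j) \neq \emptyset$ pass to the polarization in the same fashion. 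Applying Theorem \ref{m2} to $(I')^p/(J')^p$ gives $\depth (I')^p/(J')^p \leq d_{I'/J'} + 1$, and we conclude as before. The main obstacle is precisely this last bookkeeping around the distinguished variable $x_j$ through polarization; however, since $x_j$ is absent from every $f_i$, the verification is routine rather than conceptually difficult.
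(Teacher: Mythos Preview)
Your proposal is correct and follows essentially the same route as the paper: pass to the canonical form via Theorem~\ref{a}, polarize, use Theorem~\ref{ikm} together with the depth shift by $e_{I'/J'}$ to obtain $\sdepth (I')^p/(J')^p = d_{I'/J'}+1$, and then invoke Theorem~\ref{m1} (resp.\ Theorem~\ref{m2}) in the squarefree situation before unwinding. You are in fact more explicit than the paper in two places: you flag the case split $d_{I'/J'}\geq e_{I'/J'}$ (the paper writes $\sdepth I'^p/J'^p=d_{I'/J'}+1$ without comment), and you spell out why the side conditions on $x_j$, $E'$, and $B'$ in part~(2) survive polarization, whereas the paper simply says the proof of~(2) is the same as~(1) with Theorem~\ref{m2} in place of Theorem~\ref{m1}.
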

\begin{proof} Let $I'/J'$ be the canonical form of $I/J$.  By Theorem \ref{ikm} we have
$$\sdepth I'^p/J'^p=\sdepth_S I'/J'-\depth I'^p/J'^p+\depth_SI'/J'=d_{I'/J'}+1.$$
Since $I'^p$ is generated by $r'$ squarefree monomials of degree $d_{I'/J'}$ and a set $E'^p$ of squarefree  monomials of degree $d_{I'/J'}+1$ we get by Theorem \ref{m1} that $\depth I'^p/J'^p\leq d_{I'/J'}+1$. Hence $\depth_SI/J=\depth_SI'/J'\leq t+1$ by Theorem \ref{a}, that is (1) holds. The proof of (2) is the same using Theorem \ref{m2} instead Theorem \ref{m1}.
\end{proof}
\begin{remark} {\em Let $I$ be generated by some monomials $h_1,\ldots,h_r$ of degree $d$ and a set of monomials $E$ of monomials of degree $\geq d+1$. It is possible that $I'$ is generated by $f_1,\ldots,f_{r'}$ of degrees $d_{I'/J'}$ with $r'>r$ and a set $E'$ of monomials of degree $\geq d_{I'/J'}+1$. For example when $n=2$ and $I=(x_1^3x_2^4,x_1^{11}x_2)$ we have $r=1$ and we see that $I'=(x_1x_2^2,x_1^2x_2)$ has $r'=2$.}
\end{remark}

\begin{example} {\em Let $n=2$, $d=1$, $I=(x_1)$, $J=(x_1x_2^2)$. Then $e_1=1$, $e_2=2$, $e_{I/J}=1$, $t=0$ and $I^p/J^p=(x_1)/(x_1x_2y_2)$, where $y_2$ is the new variable from polarization. We have $I/J\cong x_1K[x_1]\oplus x_1x_2K[x_1]$ as graded $K$-vector spaces.
 Thus $\sdepth_SI/J=1=t+1$. By (1) of the above theorem
we get $\depth_SI/J\leq 1$, the inequality being in fact an equality.}
\end{example}

\begin{theorem}\label{main1}  Let $J \subsetneq  I$ be monomial ideals of $S$ not necessarily squarefree.  Assume that $\sdepth_SI/J=t$. Then $\depth_SI/J=t$
\end{theorem}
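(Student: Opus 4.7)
The plan is to follow the template of Theorem \ref{main}: reduce via the canonical form and polarization to an extremal squarefree question, then invoke the corresponding base case. By Theorem \ref{a} we may replace $I/J$ with its canonical form $I'/J'$, so that $\sdepth_S I'/J' = t$ and it suffices to show $\depth_S I'/J' = t$. Combining Theorem \ref{ikm} with the identity $\depth I'^p/J'^p = \depth_S I'/J' + e_{I'/J'}$ from the paragraph preceding Proposition \ref{ds} yields
\[
\sdepth I'^p/J'^p = \sdepth_S I'/J' + e_{I'/J'} = t + e_{I'/J'}.
\]

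Assume first that $t > 0$. By definition of the canonical form, $t = d_{I'/J'} - e_{I'/J'}$, so $\sdepth I'^p/J'^p = d_{I'/J'}$. The squarefree ideals $I'^p \supsetneq J'^p$ have $I'^p$ generated in degree $d_{I'/J'}$ and $J'^p$ in degrees $\geq d_{I'/J'}+1$, so the squarefree bound recorded in the proof of Proposition \ref{ds} forces both $\sdepth I'^p/J'^p$ and $\depth I'^p/J'^p$ to be at least $d_{I'/J'}$. The sdepth has therefore attained its lower bound. I would then invoke the extremal case of Stanley's conjecture in the squarefree setting --- the analogue of Conjecture \ref{c} with $d+1$ replaced by $d$, namely that $\sdepth_S I/J = d$ forces $\depth_S I/J = d$ when $I$ is squarefree generated in degree $d$ and $J$ in degrees $\geq d+1$. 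This would give $\depth I'^p/J'^p = d_{I'/J'}$, and unpolarizing yields $\depth_S I/J = d_{I'/J'} - e_{I'/J'} = t$.

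The residual case $t = 0$ should be handled directly: here $\sdepth_S I/J = 0$ produces, in some realizing Stanley decomposition, a zero-dimensional Stanley space $K\cdot u$ with $u \in I\setminus J$. The multigraded direct-sum structure combined with the rigidity of the canonical form (in which $d_{I'/J'} \leq e_{I'/J'}$) should force $x_i u \in J$ for every $i \in [n]$, so that $u + J$ lies in the socle of $I/J$ and $\depth_S I/J = 0 = t$.

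The main obstacle is the extremal squarefree instance used in the $t > 0$ case. Whereas Conjecture \ref{c} concerns $\sdepth = d+1$, one above the floor given by Proposition \ref{lb}, and required the case-by-case work of \cite{P3}, \cite{PZ2}, \cite{AP} for small $r$, the present regime $\sdepth = d$ sits exactly on the floor. Any realizing Stanley decomposition is correspondingly rigid: one expects it to contain a Stanley space $K[Z]\cdot u$ with $|Z| = d$ and $u$ a degree-$d$ generator of $I^p$, from which a socle-type element of $I^p/J^p$ modulo a regular sequence of length $d$ can be extracted to supply the upper bound $\depth \leq d$.
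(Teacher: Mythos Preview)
Your reduction via the canonical form (Theorem~\ref{a}) and polarization (Theorem~\ref{ikm}) is exactly the paper's approach, and your computation $\sdepth I'^p/J'^p = d_{I'/J'}$ in the case $t>0$ is correct. The point you are missing is that what you call ``the main obstacle'' --- the squarefree extremal case asserting that $\sdepth I'^p/J'^p = d_{I'/J'}$ forces $\depth I'^p/J'^p \leq d_{I'/J'}$ --- is not an obstacle at all: it is precisely \cite[Theorem~4.3]{P}, already established and cited repeatedly in this paper. The paper's entire proof is therefore the single sentence ``argue as in Theorem~\ref{main}, invoking \cite[Theorem~4.3]{P} in place of Theorem~\ref{m1}.'' Your last paragraph, sketching a rigidity argument for this squarefree base case, is unnecessary (and, as written, imprecise: in an optimal Stanley decomposition the piece of dimension $d$ need not be supported on a degree-$d$ generator).

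Your separate treatment of $t=0$ also contains a genuine error. From $\sdepth_S I/J = 0$ you correctly deduce that every Stanley decomposition contains a $0$-dimensional piece $K\cdot u$, but the conclusion ``$x_i u \in J$ for every $i$'' does not follow: a Stanley decomposition is only a direct sum of $K$-vector spaces, so $x_i u$ may land in another Stanley piece rather than in $J$. Thus this does not produce a socle element directly, and the case $t=0$ must be handled through the same polarization route and the cited theorem, not by the shortcut you propose.
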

The proof is similar to the proof of Theorem \ref{main} using now \cite[Theorem 4.3]{P} instead Theorem \ref{m1}.

\begin{example}\label{exe}  {\em Let $n=2$, $d=1$, $I=(x_2)$, $J=(x_1^2x_2,x_1x_2^2)$. Then $e_1=e_2=e_{I/J}=2$, $t=\max\{-1,0\}=0$ and $I^p/J^p=(x_2)/(x_1y_1x_2,x_1x_2y_2)$, where $y_1,y_2$ are the new variables from polarization.  Since $x_2$ induces a nonzero element of the socle of $I/J$ we see that $\sdepth_SI/J=0$.
 Thus $\sdepth_SI/J=t=0$. By  the above theorem
we get $\depth_SI/J= 0$.}
\end{example}

\section{ Stanley depth of a factor of squarefree monomial ideals}

The above theorem implies the following corollary.
\begin{proposition} \label{pr} Suppose that $I \subset S$ is minimally generated by  $6$ variables\\ $\{x_1,\ldots,x_6\}$ and  $J \subsetneq  I$ a squarefree  monomial ideal. If  $\sdepth_SI/J=2$ then  $\depth_SI/J\leq 2$.
\end{proposition}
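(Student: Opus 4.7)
Since both $I$ and $J$ are squarefree, polarization is trivial and the canonical form coincides with $I/J$; in particular $t_{I/J}=d_{I/J}=1$. The hypothesis $\sdepth_S I/J=2$ therefore reads $\sdepth_S I/J=t_{I/J}+1$, and the desired bound $\depth_S I/J\leq 2$ reads $\depth_S I/J\leq t_{I/J}+1$. So the proposition is exactly the case $(r,d,E)=(6,1,\emptyset)$ of Conjecture \ref{c}, one variable-generator beyond the reach of Theorem \ref{main}.

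My plan is to peel off one of the six variable generators --- say $x_6$ --- and reduce to the $r=5$ situation already covered by Theorems \ref{m1} and \ref{m2}. Concretely, one looks at the short exact sequence of multigraded $S$-modules
$$0 \longrightarrow (x_6)\big/\bigl((x_6)\cap J\bigr) \longrightarrow I/J \longrightarrow I\big/\bigl(J+(x_6)\bigr) \longrightarrow 0.$$
Viewed in $S/(x_6)$, the right-hand module is a factor of two squarefree monomial ideals whose minimal degree-one generators are exactly the five variables $x_1,\ldots,x_5$, together with the squarefree monomials of higher degree inherited from those elements of $G(J)$ that were divisible by $x_6$. This fits squarely into the $r'=5$ setting of Theorems \ref{m1} and \ref{m2}. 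The left-hand module is a principal piece, isomorphic to a multidegree shift of $S/(J:x_6)$.

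The argument then goes by contrapositive: assume $\depth_S I/J\geq 3$ and show $\sdepth_S I/J\neq 2$. The depth lemma applied to the above sequence, together with some control on $\depth_S S/(J:x_6)$ (a squarefree factor of strictly simpler type than $I/J$), will force $\depth I/(J+(x_6))\geq 2$. Invoking the $r=5$ cases of Conjecture \ref{c} (Theorems \ref{m1}, \ref{m2}) then produces a Stanley decomposition of the right-hand term realizing sdepth $\geq 2$. Gluing this with a Stanley decomposition of the principal left-hand term, via the multigraded vector-space splitting of $I\setminus J$ into monomials divisible by $x_6$ versus those not, should yield a Stanley decomposition of $I/J$ of sdepth $\geq 3$, contradicting the hypothesis.

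The main obstacle is the last gluing step: Stanley depth is not well-behaved under short exact sequences in general, so the assembly of a decomposition of $I/J$ out of decompositions of the two ends must be done by hand. The two saving graces are that this particular sequence arises from a variable split (so the partition of $I\setminus J$ into ``$x_6$-divisible'' versus ``not $x_6$-divisible'' monomials is compatible with natural Stanley pieces), and that the left-hand term is principal. A secondary issue is the choice of $x_6$: one may need to select the peeled variable carefully so that either $J:x_6$ is genuinely simpler than $J$, or the right-hand term meets the support condition of Theorem \ref{m2} in case its generator count sits exactly at the borderline $r'=5$.
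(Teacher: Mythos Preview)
Your contrapositive strategy has a genuine gap at the gluing step, and it is not the one you flagged. The splitting of the monomials of $I\setminus J$ into those divisible by $x_6$ and those not does make the assembly of Stanley decompositions from the two ends legitimate; what you cannot supply is a Stanley decomposition of the left end of the required quality. To conclude $\sdepth_S I/J\geq 3$ you need $\sdepth_S\bigl((x_6)/((x_6)\cap J)\bigr)\geq 3$, i.e.\ $\sdepth_S S/(J:x_6)\geq 2$. But $J:x_6$ is an arbitrary squarefree monomial ideal of $S$, so this is exactly Stanley's Conjecture for a general Stanley--Reisner ring, which is not at your disposal. Calling $S/(J:x_6)$ ``strictly simpler'' does not help: it is not of the form treated by the proposition (an $I'/J'$ with $I'$ generated by variables), so no induction applies. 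A second, related problem: the Depth Lemma does not go in the direction you need. From $\depth_S I/J\geq 3$ you cannot read off lower bounds for the depth of either end of the sequence without already knowing one of them; your phrase ``together with some control on $\depth_S S/(J:x_6)$'' is precisely the missing input. Finally, even on the right end Theorem~\ref{m2} carries a support hypothesis ($(B\setminus E)\cap(x_j)\neq\emptyset$ for some suitable $j$) which may fail; you do not address this.

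The paper avoids all of this by arguing in the direct direction, never needing to produce a Stanley decomposition. One first uses \cite[Proposition~1.3]{PZ} to locate an interval $[x_6,c]$ with $c=x_6x_kx_q\notin J$, and then builds $\tilde I$ generated by $x_1,\dots,x_5$ together with the remaining degree-two monomials in $(x_6)$. In the exact sequence $0\to\tilde I/\tilde J\to I/J\to I/(J+\tilde I)\to 0$ the cokernel is the single interval $[x_6,c]$, hence has $\sdepth=3$; Rauf's inequality \cite[Lemma~2.2]{R} then forces $\sdepth_S\tilde I/\tilde J\leq 2$. Now $\tilde I/\tilde J$ has $r=5$ degree-one generators, so \cite[Theorem~4.3]{P} (if $\sdepth=1$) or Theorem~\ref{m2} with $j=6$ (if $\sdepth=2$ and the support condition holds) gives $\depth_S\tilde I/\tilde J\leq 2$, and the Depth Lemma pushes this back to $\depth_S I/J\leq 2$. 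The case where the support condition of Theorem~\ref{m2} fails (i.e.\ $x_ix_6\in J$ for all $i\leq 5$) is handled by a further case split using the sequences with kernels $(x_6)/((x_6)\cap J)$ and $(x_1,\dots,x_5)/((x_1,\dots,x_5)\cap J)$, again only ever needing depth information about principal pieces, never their Stanley depth.
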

\begin{proof} By \cite[Proposition 1.3]{PZ} we see that there exists $c=x_6x_kx_q\not\in J$ for $6<k<q\leq n$.  Let $B$ be the set of all squarefree monomials from $I\setminus J$ and $\tilde I$ be the ideal generated by $x_1,\ldots, x_5$ and ${\tilde E}=B\setminus ((x_1,\ldots,x_5)\cup [x_6,c])$. Set  ${\tilde J}=J\cap {\tilde I}$. Then for $j=6$ we have ${\tilde E}\subset (x_j)$. In the following exact sequence
$$0\to {\tilde I}/{\tilde J}\to I/J\to I/J+{\tilde I}\to 0$$
the last term is isomorphic with $(x_6)/(x_6)\cap (J+{\tilde I})$ and has depth $\geq 2$ and sdepth $3$ because  it has just the interval $[x_6,c]$. Suppose that $\sdepth_SI/J=2$. By \cite[Lemma 2.2]{R} we get $\sdepth_S{\tilde I}/{\tilde J}\leq 2$. When $\sdepth_S{\tilde I}/{\tilde J}= 1$ then it is enough to apply \cite[Theorem 4.3]{P}. If $\sdepth_S{\tilde I}/{\tilde J}= 2$ and $(B\setminus {\tilde E})\cap (x_j)\not=\emptyset$ then it is enough to apply Theorem \ref{m2}.

Now suppose that $(B\setminus {\tilde E})\cap (x_j)=\emptyset$, that is $B\cap (x_6)\cap (x_1,\ldots,x_5)=\emptyset$.
In the following exact sequence
$$0\to (x_6)/(x_6)\cap J\to I/J\to I/(J,x_6)\to 0$$
if the last term has sdepth $\geq 3$ then the first term has sdepth $\leq 2$ as above and so also depth $\leq 2$. Otherwise, the last term has sdepth $\leq 2$. But the last term is isomorphic with $(x_1,\ldots,x_5)/  (x_1,\ldots,x_5)\cap J$ because  $B\cap (x_6)\cap (x_1,\ldots,x_5)=\emptyset$. Thus in the exact sequence
$$0\to (x_1,\ldots,x_5)/  (x_1,\ldots,x_5)\cap J\to I/J\to I/(J,x_1,\ldots,x_5)\to 0$$
the first term has sdepth $\leq 2$ and so its depth $\leq 2$ by Theorem \ref{m2} when there exists $k>6$ such that $B\cap (x_1,\ldots,x_5)\cap (x_k)\not =\emptyset$. Otherwise, $J\geq (x_1,\ldots,x_5) (x_6,\ldots,x_n)$ and we get $$\depth_S(x_1,\ldots,x_5)/(x_1,\ldots,x_5)\cap J=\depth_{\tilde S}(x_1,\ldots,x_5){\tilde S}/(x_1,\ldots,x_5)\cap J\cap {\tilde S}\leq 1$$
 for ${\tilde S}=K[x_1,\ldots,x_5]$. Since the last term is isomorphic with $(x_6)/J\cap (x_6)$ it has depth $\geq 2$ and the Depth Lemma ends the proof.
\hfill\  \end{proof}

\begin{proposition}\label{main2} Suppose that $I \subset S$ is minimally generated by  $6$ variables \\
$\{x_1,\ldots,x_6\}$ and   $J \subsetneq  I$ is a monomial ideal  not necessarily squarefree. Suppose that  $\sdepth_SI/J=t+1$.
 Then  $\depth_SI/J\leq t+1$.
\end{proposition}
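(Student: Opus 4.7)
The approach mirrors that of Theorem~\ref{main}, with Proposition~\ref{pr} playing the role previously played by Theorems~\ref{m1} and~\ref{m2}: pass to the canonical form, polarize to the squarefree case, and then invoke Proposition~\ref{pr}.

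First, let $I'/J'$ denote the canonical form of $I/J$. By Theorem~\ref{a} both Stanley depth and depth are preserved, so $\sdepth_S I'/J' = t+1$ and $\depth_S I'/J' = \depth_S I/J$, and it suffices to bound $\depth_S I'/J'$. Because each minimal generator of $I$ is a single variable, the canonical-form procedure leaves $I$ untouched: we still have $I' = (x_1,\ldots,x_6)$, so $d_{I'/J'}=1$ and $I'$ has no generators of higher degree. Polarizing, the squarefree monomial ideals $J'^{p} \subsetneq I'^{p}$ live in the enlarged polynomial ring $S^{p}$, and since each $x_i$ is already squarefree, $I'^{p}$ is still minimally generated by the six variables $x_1,\ldots,x_6$ with no additional minimal generators (that is, the corresponding set $E'^{p}$ is empty).

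Next, combining Theorem~\ref{ikm} with the identity $\depth I'^{p}/J'^{p} - \depth_S I'/J' = e_{I'/J'}$ yields
$$\sdepth I'^{p}/J'^{p} = \sdepth_S I'/J' + e_{I'/J'} = (t+1)+e_{I'/J'}.$$
In the principal case $t = d_{I'/J'}-e_{I'/J'} = 1-e_{I'/J'} \geq 0$, the right-hand side equals $2 = d_{I'^{p}/J'^{p}}+1$, so $\sdepth I'^{p}/J'^{p}=2$ and Proposition~\ref{pr} applies directly to give $\depth I'^{p}/J'^{p}\leq 2$. Subtracting $e_{I'/J'}$ and using Theorem~\ref{a}, we conclude
$$\depth_S I/J = \depth I'^{p}/J'^{p} - e_{I'/J'} \leq 2 - e_{I'/J'} = t+1.$$

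The main obstacle is the borderline case $e_{I'/J'}\geq 2$, in which $t=0$ and $\sdepth I'^{p}/J'^{p} = 1+e_{I'/J'}\geq 3$ does not immediately match the hypothesis of Proposition~\ref{pr}. Here one has to mimic the short exact sequence technique of Proposition~\ref{pr} at the polarized level, splitting off one of the six variables (say $x_6$ together with the interval that witnesses $\sdepth=3$ there) via an exact sequence $0\to (x_6)/(x_6)\cap J'^{p}\to I'^{p}/J'^{p}\to I'^{p}/(J'^{p},x_6)\to 0$, applying \cite[Lemma~2.2]{R} to bring the relevant Stanley depth back down to~$2$, and then invoking Proposition~\ref{pr} on the truncated piece. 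Once this case analysis is carried out, the translation back to $I/J$ through the canonical form and polarization is entirely formal, as in Theorem~\ref{main}.
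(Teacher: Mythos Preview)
Your argument in the principal case $e_{I'/J'}\le 1$ is correct and matches the paper's proof exactly: the paper merely says the proof is ``similar to the proof of Theorem~\ref{main} using now Proposition~\ref{pr} instead Theorem~\ref{m1}'', and that template yields $\sdepth I'^p/J'^p=2$ so that Proposition~\ref{pr} gives $\depth I'^p/J'^p\le 2$ and hence $\depth_S I/J\le t+1$.

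Your treatment of the borderline case $e_{I'/J'}\ge 2$, however, is not a proof. You assert that ``one has to mimic the short exact sequence technique of Proposition~\ref{pr}'' and split off a variable, but you neither carry this out nor explain why it would succeed. The exact-sequence reductions in Proposition~\ref{pr} are tailored to the situation $\sdepth=2$, reducing to five generators so that Theorem~\ref{m2} or \cite[Theorem~4.3]{P} applies; there is no evident mechanism by which the same moves would control a polarized module with $\sdepth I'^p/J'^p=1+e_{I'/J'}\ge 3$, and after removing one variable you would still need the special hypotheses of Theorem~\ref{m2}, which are not guaranteed. As written this paragraph is a gap, not an argument.

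It is worth observing that the paper's one-line proof shares this issue: following the template of Theorem~\ref{main} literally uses the identity $\sdepth I'^p/J'^p=d_{I'/J'}+1$, which presupposes $t=d_{I'/J'}-e_{I'/J'}$, i.e.\ $e_{I'/J'}\le d_{I'/J'}=1$ in the present setting. You have at least isolated the difficulty, but neither your proposal nor the paper's sketch covers the case $e_{I'/J'}\ge 2$.
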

The proof is similar to the proof of Theorem \ref{main} using now Proposition \ref{pr} instead Theorem \ref{m1}.
\begin{example}  {\em
Let $n=7$,  $I=(x_1,\ldots,x_6),$
$ J=(x_1^2,x_1x_2,\ldots,x_1x_5,x_1x_7)$. Then $t=0$.
The element ${\hat x}_1\in I/J$ induced by $x_1$ is annihilated by all variables but $x_6$.
It follows that $\sdepth_SI/J\leq 1$. Thus $\sdepth_SI/J\leq t+1$ and so $\depth_SI/J\leq 1$ by Proposition \ref{main2}. Note that
$I^p/J^p=(x_1,\ldots,x_6)/(x_1y,x_1x_2,\ldots,x_1x_5,x_1x_7)$ has sdepth $\leq 2$ because now the element of $I^p/J^p$ induced by $x_1$ is annihilated by all variables but $x_6,y$.}
\end{example}


\begin{thebibliography}{99}
\bibitem{BKU} W.\ Bruns, C.\ Krattenthaler, J.\ Uliczka, {\em Stanley decompositions and Hilbert depth in the Koszul complex}, J. Commutative Alg., {\bf 2} (2010), 327-357.

   \bibitem{Ci} M.\ Cimpoeas, {\em The Stanley conjecture on monomial almost complete intersection ideals}, Bull.
Math. Soc. Sci. Math. Roumanie, {\bf 55(103)} (2012), 35-39.

\bibitem{HVZ} J.\ Herzog, M.\ Vladoiu, X.\ Zheng, {\em How to compute the Stanley depth of a monomial ideal,}  J.  Algebra, {\bf 322} (2009), 3151-3169.
\bibitem{HPV} J.\ Herzog, D.\ Popescu, M.\ Vladoiu, {\em Stanley depth and size of a monomial ideal}, Proc. Amer.
Math. Soc., {\bf 140} (2012), 493-504.
\bibitem{IZ}  B.\ Ichim,  A.\ Zarojanu, {\em  An algorithm for computing the multigraded Hilbert depth of a module,} to appear in Exp. Math., arXiv:AC/1304.7215v2.

\bibitem{IKM} B.\ Ichim, L. Katth\"an, J. J.\ Moyano-Fern\'andez, {\em
The behavior of Stanley depth under polarization},
arXiv:1401.4309.

\bibitem{Is} M.\ Ishaq, {\em Values and bounds of the Stanley depth}, Carpathian J. Math. {\em 27} (2011), 217-224.

\bibitem{L} G.\ Lyubeznik, {\em On the Arithmetical Rank of Monomial ideals}, J. Algebra {\bf 112}, 86-89  (1988).
\bibitem{A1} A.\ Popescu, {\em An algorithm to compute the Hilbert depth }, J. Symb. Comput., (2014), http://dx.doi.org/10.1016/j.jsc.2014.03.002, arXiv:AC/1307.6084, 2013.

\bibitem{A} A.\ Popescu, {\em Depth and Stanley depth of the canonical form of a factor of monomial ideals}, to appear in Bull. Math. Soc. Sci. Math. Roumanie, arXiv:AC/1402.5826v3.
  \bibitem{AP} A.\ Popescu, D.\ Popescu, {\em Four generated, squarefree, monomial ideals}, 2013, to appear in Proceedings of the International Conference "Experimental and Theoretical Methods in Algebra, Geometry, and Topology, June 20-24, 2013", Editors Denis Ibadula, Willem Veys, Springer-Verlag, 2014,
arXiv:AC/1309.4986v3.
\bibitem{P2}  D.\ Popescu, {\em Stanley depth of multigraded modules}, J. Algebra {\bf 312} (10) (2009) 2782-2797.
\bibitem{P1} D.\ Popescu, {\em  An inequality between depth and Stanley depth}, Bull. Math. Soc. Sc. Math. Roumanie {\bf 52}(100), (2009), 377-382.

\bibitem{P0} D.\ Popescu, {\em Graph and depth of a square free monomial ideal},  Proceedings of AMS, {\bf 140} (2012), 3813-3822.

\bibitem{P} D.\ Popescu, {\em Depth of  factors of square free  monomial  ideals}, Proceedings of AMS,{\bf 142} (2014), 1965-1972.
\bibitem{P3} D.\ Popescu, {\em Stanley depth on five generated, squarefree, monomial ideals},  2013, arXiv:AC/1312.0923v3.


\bibitem{PZ} D.\ Popescu, A.\ Zarojanu, {\em Depth of some square free monomial ideals}, Bull. Math. Soc. Sci. Math. Roumanie, {\bf 56(104)}, 2013,117-124.

\bibitem{PZ1} D.\ Popescu, A.\ Zarojanu, {\em Depth of  some special  monomial  ideals},  Bull. Math. Soc. Sci. Math. Roumanie,  {\bf 56(104)}, 2013, 365-368.
 \bibitem{PZ2} D.\ Popescu, A.\ Zarojanu, {\em Three generated, squarefree, monomial ideals}, 2013,\\  arXiv:AC/1307.8292v5.

\bibitem{R} A. Rauf, {\em Depth and Stanley depth of multigraded modules}, Comm.  Algebra, {\bf 38} (2010),773-784.
\bibitem{Sh} Y.H. \ Shen, {\em Lexsegment ideals of Hilbert depth 1}, (2012), arxiv:AC/1208.1822v1.
\bibitem{S} R.\ P.\ Stanley, {\em Linear Diophantine equations and local cohomology}, Invent. Math. 68 (1982) 175-193.
\bibitem{U} J.\ Uliczka, {\em Remarks on Hilbert series of graded modules over polynomial rings}, Manuscripta Math., {\bf 132} (2010), 159-168.

\end{thebibliography}
 \end{document}